\newtheorem{dingyi}{Definition}[section]
\newtheorem{yinli}[dingyi]{Lemma}
\newtheorem{dingli}[dingyi]{Theorem}
\newtheorem{tuilun}[dingyi]{Corollary}
\newtheorem{lizi}{Example}[]
\newcommand{\bol}{Bollob\'as}
\newcommand{\lov}{Lov\'asz}
\newcommand{\fu}{F\"{u}redi}
\newcommand{\ekr}{Erd\"{o}s-Ko-Rado}
\newcommand{\heg}{Heged\"{u}s}
\newcommand{\ELTE}{E\"{o}tv\"{o}s Lor\'{a}nd University}
\newcommand{\ELTEAdress}{P\'{a}zm\'{a}ny P\'{e}ter s\'{e}t\'{a}ny 1/C, Budapest, Hungary, H-1117}
\title{Some new \bol-type inequalities}
\author[$\sympawn$]{Erfei Yue}
\affil[$\sympawn$]{\footnotesize Institute of Mathematics, \ELTE,

\ELTEAdress,

\Letter\ yef9262@mail.bnu.edu.cn}
\date{}
\begin{document}
\maketitle

\begin{center}
\textbf{Abstract}
\end{center}

A family of disjoint pairs of finite sets~$\mathcal{P}=\{(A_i,B_i)\mid i\in[m]\}$ is called a~\bol~system if~$A_i\cap B_j\neq\emptyset$ for every~$i\neq j$,
and a skew~\bol~system if~$A_i\cap B_j\neq\emptyset$ for every~$i<j$.
~\bol~proved that for a~\bol~system, the inequality
\begin{equation*}
\sum_{i=1}^m\binom{|A_i|+|B_i|}{|A_i|}^{-1}\leqslant 1
\end{equation*}
holds.~\heg~and Frankl generalized this theorem to skew~\bol~systems with the inequality
\begin{equation*}
\sum_{i=1}^m\binom{|A_i|+|B_i|}{|A_i|}^{-1}\leqslant 1+n,
\end{equation*}
provided~$A_i,B_i\subseteq [n]$. In this paper, we improve this inequality to
\begin{equation*}
\sum_{i=1}^m \left((1+|A_i|+|B_i|)\binom{|A_i|+|B_i|}{|A_i|}\right)^{-1}\leqslant 1
\end{equation*}
with probabilistic method. We also generalize this result to partitions of sets on both symmetric and skew cases.

\section{Introduction}

Let~$[n]=\{1,\ldots,n\}$ be an~$n$-element set. A family~$\mathcal{F}$ consisting of some subsets of~$[n]$ is the main object of the research on extremal set theory.
The following concepts of~\bol~system and skew~\bol~system were introduced by~\bol~\cite{Sets} and Frankl~\cite{Book,Skew} respectively.

\begin{dingyi}
Suppose that~$\mathcal{P}=\{(A_i,B_i)\mid i\in[m]\}$ is a family of pairs of sets, where~$A_i,B_i\subseteq [n]$. Then~$\mathcal{P}$ is called a~\emph{\bol~system} if
\begin{equation*}
A_i\cap B_j=\emptyset\Leftrightarrow i=j;
\end{equation*}
and a \emph{skew~\bol~system} if
\begin{description}
  \item[(1)] $A_i\cap B_i=\emptyset$,
  \item[(2)] $A_i\cap B_j\neq\emptyset$ when~$i<j$.
\end{description}
\end{dingyi}

To solve a problem on hypergraphs,~\bol~\cite{Sets} proved the following theorem in 1965.
In 1975, Tarj\'an~\cite{Tarjan} gave a more elegant proof for it independently.

\begin{dingli}[\bol~\cite{Sets}]\label{Th:Bollobas}
Let~$\mathcal{P}=\{(A_i,B_i)\mid i\in[m]\}$ be a~\bol~system, where~$A_i,B_i\subseteq [n]$. Then
\begin{equation*}
\sum_{i=1}^m\frac{1}{\binom{|A_i|+|B_i|}{|A_i|}}\leqslant 1.
\end{equation*}
\end{dingli}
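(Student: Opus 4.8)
The plan is to use the random linear order (permutation) argument, which is essentially Katona's proof and fits the probabilistic spirit emphasized in this paper. Let $X=\bigcup_{i=1}^m (A_i\cup B_i)\subseteq[n]$ and let $\pi$ be a uniformly random linear order on $X$. For each $i\in[m]$ define the event
\[
E_i=\{\pi:\ \text{every element of }A_i\text{ precedes every element of }B_i\text{ in }\pi\}.
\]
First I would compute $\Pr[E_i]$. Since $A_i\cap B_i=\emptyset$, the restriction of $\pi$ to the $|A_i|+|B_i|$ elements of $A_i\cup B_i$ is a uniformly random linear order on that set, and among the $\binom{|A_i|+|B_i|}{|A_i|}$ ways of interleaving the $A_i$-block with the $B_i$-block exactly one places all of $A_i$ before all of $B_i$; hence $\Pr[E_i]=\binom{|A_i|+|B_i|}{|A_i|}^{-1}$.

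The crux of the argument is to show that $E_1,\dots,E_m$ are pairwise disjoint. Suppose, for contradiction, that some order $\pi$ lies in $E_i\cap E_j$ with $i\neq j$. By the \bol~condition we have both $A_i\cap B_j\neq\emptyset$ and $A_j\cap B_i\neq\emptyset$, so we may pick $x\in A_i\cap B_j$ and $y\in A_j\cap B_i$. From $E_i$, the element $x\in A_i$ precedes the element $y\in B_i$ in $\pi$; from $E_j$, the element $y\in A_j$ precedes the element $x\in B_j$ in $\pi$. These two assertions contradict each other, so $E_i\cap E_j=\emptyset$.

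Finally, since the $E_i$ are disjoint,
\[
\sum_{i=1}^m\frac{1}{\binom{|A_i|+|B_i|}{|A_i|}}=\sum_{i=1}^m\Pr[E_i]=\Pr\!\left[\bigcup_{i=1}^m E_i\right]\leqslant 1,
\]
which is exactly the claimed inequality. I expect the only genuinely substantive step to be the disjointness of the $E_i$: once one hits on tracking the ``all of $A_i$ before all of $B_i$'' event, the cross-intersection hypothesis forces the contradiction in one line, and the probability computation is routine. It is worth noting already here that this argument uses the hypothesis $A_i\cap B_j\neq\emptyset$ for \emph{both} orderings of $i,j$; in the skew setting treated later only the one-sided condition $i<j$ is available, so the events $E_i$ are no longer pairwise disjoint but merely satisfy a weaker nesting-type relation, and it is precisely this loss that the $\heg$--Frankl bound, and the sharpened inequality of the present paper, must account for.
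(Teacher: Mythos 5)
Your proof is correct: the probability computation $\Pr[E_i]=\binom{|A_i|+|B_i|}{|A_i|}^{-1}$ and the disjointness argument via $x\in A_i\cap B_j$, $y\in A_j\cap B_i$ are both sound. The paper itself only quotes Theorem~\ref{Th:Bollobas} from the literature without proof, but your random-permutation argument is exactly the technique the paper deploys for its own results (it is the $r=1$, $l=1$ case of the proof of Lemma~\ref{L1}), so nothing further needs to be said.
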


The following uniform version is a corollary of Theorem~\ref{Th:Bollobas} for~$|A_i|=a$ and~$|B_i|=b$,
and it was proved by Jaeger and Payan~\cite{FRA} in 1971 and Katona~\cite{Katona} in 1974 independently.

\begin{tuilun}\label{Th:uniform}
Let~$\mathcal{P}=\{(A_i,B_i)\mid i\in[m]\}$ be a~\bol~system, where~$A_i,B_i\subseteq [n]$, and~$|A_i|=a,|B_i|=b$ for any~$i$.
Then~$m\leqslant\binom{a+b}{a}$.
\end{tuilun}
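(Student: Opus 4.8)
The plan is to read this off directly from Theorem~\ref{Th:Bollobas}: since $|A_i|=a$ and $|B_i|=b$ for every $i$, each summand in the \bol~inequality equals $\binom{a+b}{a}^{-1}$, so the inequality becomes $m\binom{a+b}{a}^{-1}\leqslant 1$, which rearranges to $m\leqslant\binom{a+b}{a}$. This is a one-line deduction once Theorem~\ref{Th:Bollobas} is available.

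If instead a self-contained argument is wanted — and this is the version I would actually record, since it foreshadows the probabilistic method used later in the paper — I would run Katona's permutation argument. Let $X=\bigcup_{i=1}^m(A_i\cup B_i)$ and choose a uniformly random linear order $\pi$ on $X$. For each $i\in[m]$ let $E_i$ be the event that every element of $A_i$ precedes every element of $B_i$ under $\pi$. Since $A_i\cap B_i=\emptyset$, the $a+b$ elements of $A_i\cup B_i$ are distinct, all $(a+b)!$ of their relative orders are equally likely, and exactly $a!\,b!$ of them place $A_i$ entirely before $B_i$; hence $\Pr[E_i]=\binom{a+b}{a}^{-1}$.

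The key step is to verify that the events $E_1,\dots,E_m$ are pairwise disjoint. Suppose $E_i$ and $E_j$ both hold for some $i\neq j$. Pick $x\in A_i\cap B_j$ and $y\in A_j\cap B_i$, which exist by the \bol~property. From $E_i$ the element $x\in A_i$ precedes $y\in B_i$, while from $E_j$ the element $y\in A_j$ precedes $x\in B_j$; this is a contradiction. Therefore $\sum_{i=1}^m\Pr[E_i]\leqslant 1$, i.e. $m\binom{a+b}{a}^{-1}\leqslant 1$, giving $m\leqslant\binom{a+b}{a}$.

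There is essentially no obstacle here: the only point needing care is the disjointness of the $E_i$, and that is exactly where the defining conditions $A_i\cap B_j\neq\emptyset$ for $i\neq j$ are used. One could equivalently phrase the same computation as a double count of pairs $(i,\sigma)$ with $\sigma$ a linear order placing $A_i$ before $B_i$, but the probabilistic phrasing is the cleaner one to include.
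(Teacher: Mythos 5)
Your first paragraph is exactly the paper's intended argument: the paper states this as an immediate corollary of Theorem~\ref{Th:Bollobas} by specializing to $|A_i|=a$, $|B_i|=b$, and your one-line deduction is that specialization. The supplementary Katona-style permutation argument you include is also correct (the disjointness of the $E_i$ is argued properly), but it is not needed beyond the corollary deduction.
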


Theorem~\ref{Th:Bollobas} and Corollary~\ref{Th:uniform} play a pivotal role in extremal set theory. They has spurred significant research interest,
leading to generalizations for vector spaces by~\lov~\cite{LinearSpaces,Lov1,Lov2} in 1977 and affine spaces by~\heg~\cite{AffineSpaces} in 2015.

In 1982, using a creative polynomial method, Frankl~\cite{Skew} proved that the inequality of Corollary~\ref{Th:uniform}
remains true if~$\mathcal{P}$ is a skew~\bol~system (instead of~\bol~system).
However, the condition of Theorem~\ref{Th:Bollobas} cannot be weakened in the same way.
So it is natural to ask what can we say for a (nonuniform) skew~\bol~system.
In 2023, Heged\"{u}s and Frankl~\cite{Hegedus} answered the question with the following theorem.

\begin{dingli}[Heged\"{u}s and Frankl~\cite{Hegedus}]\label{Th:Hegedus}
Suppose~$A_i,B_i\subseteq [n],i\in [m]$, and~$\mathcal{P}=\{(A_i,B_i)\mid i\in[m]\}$ is a skew~\bol~system. Then
\begin{equation*}
\sum_{i=1}^m\frac{1}{\binom{|A_i|+|B_i|}{|A_i|}}\leqslant 1+n.
\end{equation*}
\end{dingli}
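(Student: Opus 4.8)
The plan is to establish, by a probabilistic argument in the spirit of Tarj\'an's proof of Theorem~\ref{Th:Bollobas}, the stronger inequality
\[
\sum_{i=1}^m\left((1+|A_i|+|B_i|)\binom{|A_i|+|B_i|}{|A_i|}\right)^{-1}\leqslant 1 ,
\]
valid for every skew~\bol~system with $A_i,B_i\subseteq[n]$, and then to read off Theorem~\ref{Th:Hegedus} from it: since $A_i\cap B_i=\emptyset$ we have $|A_i|+|B_i|\leqslant n$, so each term above is at least $\frac{1}{1+n}\binom{|A_i|+|B_i|}{|A_i|}^{-1}$, and multiplying through by $1+n$ yields $\sum_i\binom{|A_i|+|B_i|}{|A_i|}^{-1}\leqslant 1+n$.

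For the stronger inequality I would introduce a fresh symbol $*\notin[n]$ and take a uniformly random linear order $\pi$ of $[n]\cup\{*\}$. Let $E_i$ be the event that in the order $\pi$ every element of $A_i$ comes before $*$ and $*$ comes before every element of $B_i$. Since $A_i$, $B_i$ and $\{*\}$ are pairwise disjoint, $\pi$ induces the uniform distribution on the $(|A_i|+|B_i|+1)!$ orderings of $A_i\cup\{*\}\cup B_i$, of which $|A_i|!\,|B_i|!$ lie in $E_i$; hence
\[
\Pr[E_i]=\frac{|A_i|!\,|B_i|!}{(|A_i|+|B_i|+1)!}=\left((1+|A_i|+|B_i|)\binom{|A_i|+|B_i|}{|A_i|}\right)^{-1},
\]
which is exactly the $i$-th summand.

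It then remains to see that $E_1,\dots,E_m$ are pairwise disjoint, and here the skew structure enters. Given $i\neq j$, relabel so that $i<j$; the skew condition yields some $x\in A_i\cap B_j$. If $E_i$ and $E_j$ both held, then $x\in A_i$ would force $x$ before $*$ while $x\in B_j$ would force $*$ before $x$, a contradiction. So the events $E_i$ are disjoint and $\sum_i\Pr[E_i]=\Pr\!\left[\bigcup_i E_i\right]\leqslant 1$, as desired.

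I expect no real obstruction: the one point to watch is that the skew hypothesis only controls $A_i\cap B_j$ for $i<j$, yet this suffices for \emph{pairwise} disjointness, because $E_i\cap E_j$ is symmetric in $i,j$ and one may always take the smaller index to be the active one. Conceptually, inserting the marker $*$ is precisely the device that buys disjointness under the weaker skew assumption at the price of the extra factor $1+|A_i|+|B_i|$ in each probability — without it, the events ``$A_i<B_i$'' used to prove Theorem~\ref{Th:Bollobas} are disjoint only under the full \bol~hypothesis. The same random-order framework, now with one marker inserted between each consecutive pair of blocks, is what I would use for the partition versions announced in the abstract.
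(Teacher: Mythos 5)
Your proposal is correct and matches the paper's own route: the paper does not reprove Theorem~\ref{Th:Hegedus} directly but establishes exactly your stronger inequality as Theorem~\ref{Th:1}, using the same device of a uniformly random permutation of $[n]$ together with one extra marker element and the events ``all of $A_i$ precedes the marker, which precedes all of $B_i$,'' whose pairwise disjointness follows from the skew condition just as you argue. Your deduction of the bound $1+n$ from $1+|A_i|+|B_i|\leqslant 1+n$ is also the intended one.
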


Although the inequality in Theorem~\ref{Th:Hegedus} is tight, we can still strengthen it in the following way.

\begin{dingli}\label{Th:1}
Suppose~$A_i,B_i\subseteq [n],i\in [m]$, and~$\mathcal{P}=\{(A_i,B_i)\mid i\in[m]\}$ is a skew~\bol~system. Then
\begin{equation*}
\sum_{i=1}^m\frac{1}{(1+|A_i|+|B_i|)\binom{|A_i|+|B_i|}{|A_i|}}\leqslant 1.
\end{equation*}
\end{dingli}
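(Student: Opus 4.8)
The plan is to adapt the random‑permutation proof of Theorem~\ref{Th:Bollobas} (the style of argument used by Tarj\'an and Katona), but carried out on a ground set enlarged by one point. Instead of a uniformly random linear order of $[n]$, I would work with a uniformly random linear order $\prec$ of the $n+1$ symbols in $[n]\cup\{\star\}$, where $\star$ is an auxiliary ``marker''. For each $i\in[m]$ let $E_i$ be the event that every element of $A_i$ precedes $\star$ and every element of $B_i$ follows $\star$ under $\prec$; since $A_i\cap B_i=\emptyset$, this is a genuine constraint on the relative order of the $|A_i|+|B_i|+1$ distinct symbols in $A_i\cup B_i\cup\{\star\}$.

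The first step is the probability computation. Conditioning on which positions are occupied by the symbols of $A_i\cup B_i\cup\{\star\}$, their induced relative order is uniform, and $E_i$ holds precisely when that order lists the elements of $A_i$ (in some order), then $\star$, then the elements of $B_i$ (in some order). Hence
\[
\Pr[E_i]=\frac{|A_i|!\,|B_i|!}{(|A_i|+|B_i|+1)!}=\frac{1}{(1+|A_i|+|B_i|)\binom{|A_i|+|B_i|}{|A_i|}}.
\]
Equivalently, one may give each $j\in[n]$ an independent uniform real $x_j\in[0,1]$ together with an independent uniform threshold $t\in[0,1]$ and set $E_i=\{x_j<t\text{ for all }j\in A_i\}\cap\{x_k>t\text{ for all }k\in B_i\}$; then $\Pr[E_i]=\int_0^1 t^{|A_i|}(1-t)^{|B_i|}\,dt$ is the Beta integral, and the extra factor $1+|A_i|+|B_i|$ in the denominator is exactly the contribution of the marker/threshold — this is the source of the improvement over Theorem~\ref{Th:Hegedus}.

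The second step uses the skew hypothesis to show the $E_i$ are pairwise disjoint. If $i<j$ and some outcome lies in $E_i\cap E_j$, choose $z\in A_i\cap B_j$ (nonempty by part~(2) of the definition of a skew~\bol~system): on $E_i$ we have $z\prec\star$ because $z\in A_i$, while on $E_j$ we have $\star\prec z$ because $z\in B_j$, a contradiction. Since every pair $\{i,j\}$ with $i\neq j$ is covered once it is relabelled so that $i<j$, the events $E_1,\dots,E_m$ are disjoint, and therefore $\sum_{i=1}^m\Pr[E_i]=\Pr[\,\bigcup_{i=1}^m E_i\,]\leqslant 1$, which is the asserted inequality.

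I do not anticipate a genuine obstacle here: the probability identity is a one‑line multinomial (or Beta‑function) computation, and disjointness is immediate from the skew condition. The only real idea is to adjoin a single extra point to the ground set, which simultaneously sharpens the weight to $\big((1+|A_i|+|B_i|)\binom{|A_i|+|B_i|}{|A_i|}\big)^{-1}$ and restores the clean disjointness that fails for skew systems in the unmodified permutation argument. For the announced extensions to partitions of sets, in the symmetric and skew cases, I would expect to run the same construction with several ordered markers $\star_1\prec\star_2\prec\cdots$ separating the blocks and to evaluate the resulting probability as a Dirichlet (multivariate Beta) integral.
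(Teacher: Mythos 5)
Your proposal is correct and is essentially the paper's own proof: both adjoin a single marker element to $[n]$, take a uniformly random permutation of the enlarged set, define $E_i$ as the event that $A_i$ precedes the marker and $B_i$ follows it, compute $\Pr[E_i]=\bigl((1+|A_i|+|B_i|)\binom{|A_i|+|B_i|}{|A_i|}\bigr)^{-1}$, and conclude by disjointness of the $E_i$. Your disjointness step (picking $z\in A_i\cap B_j$ for $i<j$ and noting $z$ cannot lie on both sides of the marker) is a slightly more direct phrasing of the same contradiction the paper derives, so there is nothing substantive to add.
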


In 1984, Furedi~\cite{Threshold} investigated the threshold (or~$t$-intersecting) case of~\bol-type theorem.
Subsequent work by Zhu~\cite{tl}, Talbot~\cite{Inequality}, and Kang, Kim, and Kim~\cite{Inequality2} further extended the results.
Recently, more variations of~\bol-type theorem are studied,
such as Kir\'{a}ly, Nagy, P\'{a}lv\"{o}lgyi, and Visontai~\cite{weakly}, O'Neill and Verstra\"{e}te~\cite{kTuples},
Scott and Wilmer~\cite{Scott}, and Yu, Kong, Xi, Zhang, and Ge~\cite{HemiBundled}.

The uniform version of~\bol~Theorem (Theorem~\ref{Th:uniform}) was generalized to~$r$-partitions by Alon~\cite{Alon}.
More specifically, the following theorem decides the maximum cardinality of a (skew)~\bol~system in which every set has a fixed size in each part of the ground set.

\begin{dingli}[Alon~\cite{Alon}]\label{Th:Sets}
Suppose~$X=[n]$ is the disjoint union of some sets~$X_1,\ldots,X_r$.
For some fixed~$a_k,b_k$, a skew~\bol~system~$\mathcal{P}=\{(A_i,B_i)\mid i\in[m]\}$ satisfies that
\begin{equation*}
A_i,B_i\subseteq X,|A_i\cap X_k|=a_k,|B_i\cap X_k|=b_k, \forall i\in[m],k\in[r].
\end{equation*}
Then we have
\begin{equation*}
m\leqslant\prod_{k=1}^r\binom{a_k+b_k}{a_k}.
\end{equation*}
\end{dingli}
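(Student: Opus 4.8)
The plan is to prove Theorem~\ref{Th:Sets} by the multilinear algebra method (exterior powers combined with tensor products) that~\lov~used for vector-space versions of Theorem~\ref{Th:Bollobas}, adapted so that it handles the skew condition and the $r$-part refinement simultaneously. Put $n_k=a_k+b_k$ for $k\in[r]$; we may assume $|X_k|\geqslant n_k$ for every $k$, since otherwise no system with the prescribed intersection sizes exists. For each part $k$, fix vectors $\{v_x^{(k)}:x\in X_k\}\subseteq\mathbb{R}^{n_k}$ in general position, i.e.\ so that every $n_k$ of them are linearly independent; concretely one may take $v_x^{(k)}=(1,t_x,t_x^2,\dots,t_x^{n_k-1})$ for pairwise distinct reals $t_x$. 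Choosing an arbitrary linear order on each $X_k$, set
\[
u_i^{(k)}=\bigwedge_{x\in A_i\cap X_k}v_x^{(k)}\in \textstyle\bigwedge^{a_k}\mathbb{R}^{n_k},\qquad
w_i^{(k)}=\bigwedge_{x\in B_i\cap X_k}v_x^{(k)}\in \textstyle\bigwedge^{b_k}\mathbb{R}^{n_k},
\]
and then form the pure tensors $U_i=u_i^{(1)}\otimes\cdots\otimes u_i^{(r)}$ and $W_i=w_i^{(1)}\otimes\cdots\otimes w_i^{(r)}$, which lie in $V:=\bigotimes_{k=1}^{r}\bigwedge^{a_k}\mathbb{R}^{n_k}$ and in $\bigotimes_{k=1}^{r}\bigwedge^{b_k}\mathbb{R}^{n_k}$ respectively.

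The next step is to define a suitable pairing. Since $\bigwedge^{n_k}\mathbb{R}^{n_k}$ is one-dimensional, fix an identification of it with $\mathbb{R}$; then $(p,q)\mapsto p\wedge q$ is a bilinear map $\bigwedge^{a_k}\mathbb{R}^{n_k}\times\bigwedge^{b_k}\mathbb{R}^{n_k}\to\mathbb{R}$, and the map $\bigl((p_k)_k,(q_k)_k\bigr)\mapsto\prod_{k=1}^{r}(p_k\wedge q_k)$ is multilinear, hence factors through the tensor products to give a well-defined bilinear form $\Phi$ with $\Phi(U_i,W_j)=\prod_{k=1}^{r}\bigl(u_i^{(k)}\wedge w_j^{(k)}\bigr)$. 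The heart of the argument is the equivalence $\Phi(U_i,W_j)\neq 0\iff A_i\cap B_j=\emptyset$: if some $x$ lies in $A_i\cap B_j$, then in the part $k$ containing $x$ the wedge $u_i^{(k)}\wedge w_j^{(k)}$ has the repeated factor $v_x^{(k)}$ and therefore vanishes; conversely, if $A_i\cap B_j=\emptyset$, then for each $k$ the sets $A_i\cap X_k$ and $B_j\cap X_k$ are disjoint with union of size exactly $n_k$, so by general position the corresponding $n_k$ vectors are linearly independent and $u_i^{(k)}\wedge w_j^{(k)}\neq 0$, whence the product is nonzero. Plugging in the skew~\bol~conditions gives exactly a triangular pattern: $A_i\cap B_i=\emptyset$ yields $\Phi(U_i,W_i)\neq 0$, while $A_i\cap B_j\neq\emptyset$ for $i<j$ yields $\Phi(U_i,W_j)=0$ whenever $i<j$.

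It then remains to run the standard triangularity argument. If $\sum_{i=1}^{m}c_iU_i=0$ with the $c_i$ not all zero, let $j$ be the largest index with $c_j\neq 0$ and apply $\Phi(\,\cdot\,,W_j)$: every summand with $i>j$ has coefficient $0$, every summand with $i<j$ satisfies $\Phi(U_i,W_j)=0$, and we are left with $c_j\Phi(U_j,W_j)=0$, contradicting $\Phi(U_j,W_j)\neq 0$. Hence $U_1,\dots,U_m$ are linearly independent in $V$, so
\[
m\leqslant\dim V=\prod_{k=1}^{r}\dim\bigwedge^{a_k}\mathbb{R}^{n_k}=\prod_{k=1}^{r}\binom{a_k+b_k}{a_k},
\]
as desired. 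The one point that needs care is the general-position hypothesis: a single choice of the vectors $v_x^{(k)}$ must make all $m$ wedges $u_i^{(k)}\wedge w_i^{(k)}$ (for $i\in[m]$, $k\in[r]$) nonzero at once; these are finitely many polynomial non-vanishing conditions in the coordinates of the $v_x^{(k)}$, so a Vandermonde (or otherwise generic) choice works, and this is the step where the write-up should be most careful. Everything else—well-definedness of $\Phi$, the intersection/vanishing dictionary, and the triangular elimination—is routine.
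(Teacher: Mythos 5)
Your argument is correct, but there is nothing in the paper to compare it against: Theorem~\ref{Th:Sets} is only quoted from Alon~\cite{Alon} as motivation for Theorem~\ref{Th:skew}, and the paper gives no proof of it. What you have written is essentially the standard (and, up to presentation, Alon's original) argument: \lov's exterior-algebra representation of set pairs, tensored across the $r$ parts, combined with Frankl's triangular ``largest nonzero coefficient'' trick to exploit the skew condition. The key points all check out --- the uniformity $|A_i\cap X_k|+|B_j\cap X_k|=a_k+b_k=n_k$ is exactly what places $u_i^{(k)}\wedge w_j^{(k)}$ in the top exterior power $\bigwedge^{n_k}\mathbb{R}^{n_k}$ and makes the dictionary ``$\Phi(U_i,W_j)\neq 0\iff A_i\cap B_j=\emptyset$'' valid, and the multilinearity argument for the well-definedness of $\Phi$ on the tensor products is routine. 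Your closing worry about the general-position hypothesis is already resolved by your own construction: the moment-curve vectors $(1,t_x,\dots,t_x^{n_k-1})$ with distinct parameters have every $n_k$ of them independent by the Vandermonde determinant, deterministically, so no genericity or perturbation argument is needed. It is worth noting that the algebraic route is genuinely necessary here: the probabilistic permutation method the paper uses for its own results (Theorems~\ref{Th:1} and~\ref{Th:skew}) would, in the uniform setting, only give $m\leqslant\prod_{k=1}^r(1+|X_k|)\binom{a_k+b_k}{a_k}$, which is far weaker than Alon's bound, so one cannot derive Theorem~\ref{Th:Sets} from the paper's techniques.
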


In the following theorem, we generalize Theorem~\ref{Th:Sets} to the nonuniform version.

\begin{dingli}\label{Th:skew}
Suppose that~$X=[n]$ is the disjoint union of some sets~$X_1,\ldots,X_r$, and~$|X_k|=n_k$.
For some~$A_i,B_i\subseteq X$,~$\mathcal{P}=\{(A_i,B_i)\mid i\in[m]\}$ is a skew~\bol~system. Then
\begin{equation}\label{E2}
\sum_{i=1}^m\left(\prod_{k=1}^r\binom{|A_i\cap X_k|+|B_i\cap X_k|}{|A_i\cap X_k|}\right)^{-1}\leqslant\prod_{k=1}^r(1+n_k)\leqslant\left(1+\frac{n}{r}\right)^r.
\end{equation}
\end{dingli}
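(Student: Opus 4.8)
The plan is to run a random--permutation argument in each part of the ground set simultaneously, using an auxiliary ``marker'' in each part to force the relevant events to be pairwise disjoint; this simultaneously reproves Theorem~\ref{Th:1} (the case $r=1$) and produces the extra factor $\prod_k(1+n_k)$.

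Concretely, for each $k\in[r]$ I would adjoin a fresh symbol $*_k$ to $X_k$ and let $\pi_k$ be a uniformly random linear order on the $(n_k+1)$-element set $X_k\cup\{*_k\}$, with $\pi_1,\dots,\pi_r$ mutually independent. For $i\in[m]$ let $F_i$ be the event that, for every $k$, all elements of $A_i\cap X_k$ precede $*_k$ and $*_k$ precedes all elements of $B_i\cap X_k$ in $\pi_k$. Since $A_i\cap B_i=\emptyset$, the $|A_i\cap X_k|+|B_i\cap X_k|+1$ symbols of $(A_i\cap X_k)\cup(B_i\cap X_k)\cup\{*_k\}$ are distinct and appear in uniformly random relative order, so a short computation gives
\[
\Pr[F_i]=\prod_{k=1}^r\frac{1}{\bigl(1+|A_i\cap X_k|+|B_i\cap X_k|\bigr)\binom{|A_i\cap X_k|+|B_i\cap X_k|}{|A_i\cap X_k|}}.
\]

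The key step is that the $F_i$ are pairwise disjoint: if $F_i$ and $F_j$ both occurred with $i<j$, the skew condition yields some $x\in A_i\cap B_j$, which lies in a single part $X_k$ since the $X_k$ partition $X$; then $x\in A_i\cap X_k$ puts $x$ before $*_k$ in $\pi_k$, while $x\in B_j\cap X_k$ puts $*_k$ before $x$, a contradiction. Hence $\sum_{i=1}^m\Pr[F_i]\le1$. To reach \eqref{E2}, note that $A_i\cap X_k$ and $B_i\cap X_k$ are disjoint subsets of $X_k$, so $1+|A_i\cap X_k|+|B_i\cap X_k|\le1+n_k$; combining this with the trivial identity $\binom{a+b}{a}^{-1}=(1+a+b)\cdot\bigl((1+a+b)\binom{a+b}{a}\bigr)^{-1}$ applied in each part (with $a=|A_i\cap X_k|$, $b=|B_i\cap X_k|$) and taking the product over $k$ gives
\[
\prod_{k=1}^r\binom{|A_i\cap X_k|+|B_i\cap X_k|}{|A_i\cap X_k|}^{-1}\le\Bigl(\prod_{k=1}^r(1+n_k)\Bigr)\Pr[F_i].
\]
Summing over $i$ yields the first inequality of \eqref{E2}, and the second is the AM--GM inequality applied to $1+n_1,\dots,1+n_r$ together with $\sum_{k}n_k=n$.

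The main obstacle is conceptual rather than computational: the naive events ``$A_i\cap X_k$ precedes $B_i\cap X_k$ in $\pi_k$ for every $k$'' have probability exactly $\prod_k\binom{\cdot}{\cdot}^{-1}$ but need not be pairwise disjoint for a skew system, so summing them proves nothing; the marker $*_k$ is precisely what restores disjointness, at the unavoidable cost of the factor $\prod_k(1+n_k)$, which is already tight for $r=1$ by Theorem~\ref{Th:Hegedus}. Beyond isolating this device, the remaining care is routine: handling parts where $A_i\cap X_k$ or $B_i\cap X_k$ is empty (the corresponding factor is $1$ and the order condition on $\pi_k$ is vacuous), and using independence of the $\pi_k$ so that the per-part probabilities genuinely multiply.
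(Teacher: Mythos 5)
Your proposal is correct and is essentially the paper's own argument: the paper likewise adjoins one marker per part (the half-integer $n_1+\cdots+n_k+\tfrac12$ in $X_k^+$), randomizes independently within each augmented block, computes $\mathbb{P}(E_i)=\prod_k\bigl((1+|A_i\cap X_k|+|B_i\cap X_k|)\binom{|A_i\cap X_k|+|B_i\cap X_k|}{|A_i\cap X_k|}\bigr)^{-1}$, and concludes via pairwise disjointness, the bound $1+|A_i\cap X_k|+|B_i\cap X_k|\leqslant 1+n_k$, and AM--GM. Your disjointness argument (tracking a witness $x\in A_i\cap B_j$ against the marker $*_k$) is a mildly more direct phrasing of the paper's observation that $\sigma(A_i)\cap\sigma(B_j)=\emptyset$, but the proof is the same.
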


This theorem shows two things. Firstly, if we fixed a partition~$X=X_1\cup\cdots\cup X_r$, then the maximum value of the left hand side of~(\ref{E2}) is~$\prod_{k=1}^r(1+n_k)$.
Secondly, if we only fixed the ground set~$X$, while the partition is variable, then the maximum value of it is~$\left(1+\frac{n}{r}\right)^r$. 
The first inequality in~(\ref{E2}) is tight for all~$n_1,\ldots,n_r$, and the second inequality is also tight if~$r|n$, and~$X_k$'s are divided equally.
Note that the case~$r=1$ is Theorem~\ref{Th:Hegedus}.

We can also consider the symmetric version of Theorem~\ref{Th:skew}. Surprisingly, this time the symmetric version is harder than the skew version.
For the case~$r=2$, we can prove the following tight inequality for a~\bol~system.

\begin{dingli}\label{Th:2}
Suppose that~$X=[n]$ is the disjoint union of some sets~$X_1$ and~$X_2$,
and for some~$A_i,B_i\subseteq X$,~$\mathcal{P}=\{(A_i,B_i)\mid i\in[m]\}$ is a~\bol~system. Then
\begin{equation*}
\sum_{i=1}^m\left(\binom{|A_i\cap X_1|+|B_i\cap X_1|}{|A_i\cap X_1|}\binom{|A_i\cap X_2|+|B_i\cap X_2|}{|A_i\cap X_2|}\right)^{-1}\leqslant
1+\left\lfloor\frac{n}{2}\right\rfloor.
\end{equation*}
\end{dingli}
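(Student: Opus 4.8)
The plan is to adapt the random-ordering argument behind Theorems~\ref{Th:1} and~\ref{Th:skew} by using \emph{two} independent orders, one on each part. Write $n_k=|X_k|$, $a_{ik}=|A_i\cap X_k|$ and $b_{ik}=|B_i\cap X_k|$, and choose independently a uniformly random linear order $\sigma_1$ of $X_1$ and a uniformly random linear order $\sigma_2$ of $X_2$. For $i\in[m]$ and $k\in\{1,2\}$ let $E_i^{(k)}$ be the event that in $\sigma_k$ every element of $A_i\cap X_k$ precedes every element of $B_i\cap X_k$. Since $A_i\cap B_i=\emptyset$, one has $\Pr[E_i^{(k)}]=\binom{a_{ik}+b_{ik}}{a_{ik}}^{-1}$, and as $E_i^{(1)}$ and $E_i^{(2)}$ are determined by independent random orders, $\Pr[E_i^{(1)}\cap E_i^{(2)}]$ is precisely the $i$-th summand on the left-hand side. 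Hence the left-hand side equals $\mathbb{E}\,|S|$ with $S=S(\sigma_1,\sigma_2)=\{\,i : E_i^{(1)}\text{ and }E_i^{(2)}\text{ both occur}\,\}$, and it suffices to prove the deterministic bound $|S|\le 1+\lfloor n/2\rfloor$ for every fixed pair $\sigma_1,\sigma_2$.

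To bound $|S|$, fix $\sigma_1,\sigma_2$ and take distinct $i,j\in S$. First I would show that $A_i\cap B_j$ and $A_j\cap B_i$ cannot both meet the same part $X_k$: if $x\in A_i\cap B_j\cap X_k$ and $y\in A_j\cap B_i\cap X_k$, then $E_i^{(k)}$ forces $x$ before $y$ in $\sigma_k$ while $E_j^{(k)}$ forces $y$ before $x$. Since $A_i\cap B_j$ and $A_j\cap B_i$ are both nonempty (the \bol~condition) and $X=X_1\cup X_2$, this means that for each pair $\{i,j\}$ exactly one of these two sets is contained in $X_1$ and the other in $X_2$; orienting $i\to j$ when $A_i\cap B_j\subseteq X_1$ thus defines a tournament on $S$. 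Next, write $\alpha_k(i)$ for the position in $\sigma_k$ of the last element of $A_i\cap X_k$, with the convention $\alpha_k(i)=0$ when $A_i\cap X_k=\emptyset$. If $i\to j$, choose $x\in A_i\cap B_j\cap X_1$: from $x\in A_i\cap X_1$ we get $\mathrm{pos}_{\sigma_1}(x)\le\alpha_1(i)$, and from $x\in B_j\cap X_1$ together with $E_j^{(1)}$ we get $\alpha_1(j)<\mathrm{pos}_{\sigma_1}(x)$, so $\alpha_1(i)>\alpha_1(j)$. Since for distinct $i,j\in S$ exactly one of $i\to j$, $j\to i$ holds, the map $i\mapsto\alpha_1(i)$ is injective on $S$; its values lie in $\{0,1,\ldots,n_1\}$, so $|S|\le n_1+1$. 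Running the mirror argument on $X_2$ (using that $i\to j$ also forces $A_j\cap B_i\subseteq X_2$) gives $|S|\le n_2+1$, whence $|S|\le\min(n_1,n_2)+1\le\lfloor n/2\rfloor+1$, as desired.

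The delicate point is the splitting of each pair $\{i,j\}$ between $X_1$ and $X_2$: this is what makes the tournament well defined, and it is the only step that uses both halves $A_i\cap B_j\ne\emptyset$ and $A_j\cap B_i\ne\emptyset$ of the symmetric \bol~condition. It is also where the approach hits its limits. For $r\ge 3$ parts, the two "forbidden same-part" constraints no longer force $A_i\cap B_j$ and $A_j\cap B_i$ into two distinct single parts, so no clean tournament appears; and for a skew system only $A_i\cap B_j$ with $i<j$ is guaranteed nonempty, so the symmetry needed to orient every pair is absent. The remaining details are routine, the main one being to handle the cases where some $A_i\cap X_k$ (or $B_i\cap X_k$) is empty, which the convention $\alpha_k(i)=0$ takes care of.
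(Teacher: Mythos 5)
Your proof is correct, but it takes a genuinely different route from the paper. The paper derives Theorem~\ref{Th:2} as the $r=2$ case of Lemma~\ref{L1}: it augments every part except one chosen part $X_l$ with an extra ``separator'' element, takes a single uniform permutation preserving each augmented part, and defines events $E_i$ (all of $A_i$ before the separator before all of $B_i$ in the augmented parts, and merely $A_i$ before $B_i$ in part $l$) that are shown to be \emph{pairwise disjoint}; summing their probabilities, each at least $(1+n_l)\prod_k\bigl((1+n_k)\binom{|A_i\cap X_k|+|B_i\cap X_k|}{|A_i\cap X_k|}\bigr)^{-1}$, gives the bound $\prod_k(1+n_k)/(1+n_l)$, and choosing $l$ to maximize $n_l$ yields $1+\min\{n_1,n_2\}\leqslant 1+\lfloor n/2\rfloor$. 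You instead use no separators: your events overlap, you compute the left-hand side \emph{exactly} as $\mathbb{E}|S|$, and you bound $|S|$ deterministically by $1+\min\{n_1,n_2\}$ via the tournament induced by which part contains $A_i\cap B_j$ versus $A_j\cap B_i$, together with the injectivity of the ``last position of $A_i\cap X_1$'' statistic. Your key claim (that $A_i\cap B_j$ and $A_j\cap B_i$ cannot both meet the same part, hence one lies in $X_1$ and the other in $X_2$) and the resulting injection are sound, including the empty-set conventions, so the argument is complete. What each approach buys: the paper's separator trick extends uniformly to all $r$ (giving Lemma~\ref{L1} and hence Theorem~\ref{Th:sym}) and parallels the skew proofs, at the cost of an inequality $1+|A_i\cap X_k|+|B_i\cap X_k|\leqslant 1+n_k$ inside the probability estimate; your argument is an exact first-moment computation whose only loss is the clean combinatorial bound on $|S|$, which makes the source of the constant $1+\min\{n_1,n_2\}$ transparent, but, as you correctly observe, the tournament structure collapses for $r\geqslant 3$ and for skew systems, so it does not recover the paper's more general results.
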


And for an arbitrary~$r\geqslant 1$, we can prove the following theorem.

\begin{dingli}\label{Th:sym}
Suppose that~$X=[n]$ is the disjoint union of some sets~$X_1,\ldots,X_r$,
and for some~$A_i,B_i\subseteq X$,~$\mathcal{P}=\{(A_i,B_i)\mid i\in[m]\}$ is a~\bol~system. Then
\begin{equation*}
\sum_{i=1}^m\left(\prod_{k=1}^r\binom{|A_i\cap X_k|+|B_i\cap X_k|}{|A_i\cap X_k|}\right)^{-1}\leqslant\left(1+\frac{n}{r}\right)^{r-1}.
\end{equation*}
\end{dingli}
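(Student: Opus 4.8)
The plan is to run the permutation method used for the symmetric \bol~theorem, but with one independent random permutation for each part, and then to extract from the full \bol~condition an antichain estimate. Concretely, take independently for each $k\in[r]$ a uniformly random permutation $\pi_k$ of $X_k$, and for each index $i$ let $E_i$ be the event that in every $\pi_k$ all elements of $A_i\cap X_k$ precede all elements of $B_i\cap X_k$. Since $A_i\cap B_i=\emptyset$, within a single part the probability of ``$A_i\cap X_k$ before $B_i\cap X_k$'' is $\binom{|A_i\cap X_k|+|B_i\cap X_k|}{|A_i\cap X_k|}^{-1}$, so by independence of the $\pi_k$ we get $\Pr[E_i]=\prod_{k=1}^r\binom{|A_i\cap X_k|+|B_i\cap X_k|}{|A_i\cap X_k|}^{-1}$. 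Hence the left-hand side of the theorem equals the expected number of indices $i$ for which $E_i$ occurs, and it suffices to show that for \emph{every} outcome of the $\pi_k$ this number is at most $(1+n/r)^{r-1}$.

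So fix the permutations. For every index $i$ with $E_i$ occurring, record the vector $\ell_i=(\ell_{i1},\dots,\ell_{ir})$, where $\ell_{ik}\in\{0,1,\dots,n_k\}$ is the position in $\pi_k$ of the last element of $A_i\cap X_k$ (and $\ell_{ik}=0$ if $A_i\cap X_k=\emptyset$). The key claim is that if $E_i$ and $E_j$ both occur with $i\neq j$, then $\ell_i$ and $\ell_j$ are incomparable in the coordinatewise order on $P:=\{0,\dots,n_1\}\times\cdots\times\{0,\dots,n_r\}$. Indeed, by the \bol~property pick $x\in A_i\cap B_j$ and say $x\in X_k$: its position in $\pi_k$ is at most $\ell_{ik}$ since $x\in A_i\cap X_k$, and it is greater than $\ell_{jk}$ since $E_j$ forces every element of $A_j\cap X_k$ to precede $x\in B_j\cap X_k$ (the convention $\ell_{jk}=0$ covers $A_j\cap X_k=\emptyset$, as positions are $\geqslant1$); thus $\ell_{jk}<\ell_{ik}$, which rules out $\ell_i\leqslant\ell_j$, and the symmetric use of some $x'\in A_j\cap B_i$ rules out $\ell_j\leqslant\ell_i$. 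In particular the vectors $\ell_i$ are pairwise distinct, so $\{\ell_i: E_i\text{ occurs}\}$ is an antichain in $P$.

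It then remains to bound the size of an antichain $\mathcal{A}\subseteq P$. For any fixed coordinate $k_0$, deleting the $k_0$-th coordinate is injective on $\mathcal{A}$ (two points of $\mathcal{A}$ agreeing off coordinate $k_0$ would be comparable), so $|\mathcal{A}|\leqslant\prod_{k\neq k_0}(n_k+1)$; choosing $k_0$ with $n_{k_0}$ largest gives a bound of $\prod_{k=1}^r(n_k+1)\big/\max_k(n_k+1)$ on the number of occurring $E_i$, hence on the left-hand side. Finally, writing $y_k=n_k+1$ so that $\sum_k y_k=n+r$ and $\max_k y_k\geqslant(n+r)/r$, AM--GM applied to the $r-1$ factors other than the largest gives $\prod_k y_k\big/\max_k y_k\leqslant\bigl((n+r)/r\bigr)^{r-1}=(1+n/r)^{r-1}$ for $r\geqslant2$ (the case $r=1$ being Theorem~\ref{Th:Bollobas}); for $r=2$ this antichain bound is $1+\min(n_1,n_2)\leqslant 1+\lfloor n/2\rfloor$, recovering Theorem~\ref{Th:2}.

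The only genuinely non-mechanical point is the incomparability claim: one must see that the symmetric \bol~condition translates exactly into mutual incomparability of the cut-vectors, whereas the skew condition yields only distinctness — which is precisely the gap between Theorem~\ref{Th:skew} and the present statement. Everything afterwards is a routine projection argument for antichains together with AM--GM; the one spot to watch is the degenerate case $A_i\cap X_k=\emptyset$, handled by the convention $\ell_{ik}=0$ as above.
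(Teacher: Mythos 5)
Your proof is correct, but it takes a genuinely different route from the paper's. The paper proves an auxiliary statement (Lemma~\ref{L1}): for each fixed $l\in[r]$ it adjoins a separator element to every part except $X_l$, draws one uniform permutation of the augmented ground set that preserves each part, and shows that the events $E_i$ (separated in the parts $k\neq l$, merely ``$A$ before $B$'' in part $l$) are pairwise \emph{disjoint}; summing their probabilities gives $\sum_i\prod_k\binom{|A_i\cap X_k|+|B_i\cap X_k|}{|A_i\cap X_k|}^{-1}\leqslant\prod_k(1+n_k)/(1+n_l)$, and the theorem follows by taking the geometric mean over all $l$ and applying AM--GM. You instead use independent permutations of the parts with no separators, so your events are not disjoint; the work is shifted to the deterministic claim that the cut-vectors $\ell_i$ of the simultaneously occurring events form an antichain in the grid $\{0,\dots,n_1\}\times\cdots\times\{0,\dots,n_r\}$, which you then bound by a coordinate-deleting projection. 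Your argument is sound (the empty-intersection convention and the possibility that the two witnesses $x,x'$ lie in the same part are both handled correctly), and it in fact yields the slightly sharper intermediate bound $\prod_k(1+n_k)/\max_k(1+n_k)$, which equals the minimum over $l$ of the paper's Lemma~\ref{L1} bounds rather than their geometric mean; it also isolates cleanly why the full \bol~condition buys mutual incomparability of the cut-vectors while the skew condition buys only distinctness. The paper's method, by contrast, reuses the disjointness-of-events mechanism already set up for the skew case and so needs no separate antichain counting.
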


The case~$r=1$ is the original~\bol~Theorem (Theorem~\ref{Th:Bollobas}). And the case~$r=2$ leads to an inequality
\begin{equation*}
\sum_{i=1}^m \left(\binom{|A_i\cap X_1|+|B_i\cap X_1|}{|A_i\cap X_1|}\binom{|A_i\cap X_2|+|B_i\cap X_2|}{|A_i\cap X_2|}\right)^{-1}\leqslant 1+\frac{n}{2},
\end{equation*}
which is slightly weaker than the one in Theorem~\ref{Th:2}, but still tight for even~$n$'s.
For the case~$r\geqslant 3$, unfortunately, the upper bound given in Theorem~\ref{Th:sym} does not seem to be tight.

The paper is organized as follows. Section 2 focuses on proving Theorem~\ref{Th:1} and Theorem~\ref{Th:skew}. We also discuss the tightness of Theorem~\ref{Th:skew}.
Section 3 presents proofs for Theorem~\ref{Th:2} and Theorem~\ref{Th:sym}.
Additionally, we leverage these theorems to derive a generalized form of the LYM-inequality.

\section{Proof of the skew version and its tightness}
For a fixed (finite) set~$X$, let~$S(X)$ be the group consisting of all permutations on~$X$, and~$S_n=S([n])$.
For a permutation~$\sigma\in S(X)$ and a subset~$A\subseteq X$, denote~$\sigma(A)$ the image of~$A$.
In other words,~$\sigma(A)=\{\sigma(a)\mid a\in A\}$.
In this section, we will prove Theorem~\ref{Th:1} and Theorem~\ref{Th:skew} with probabilistic method.

\begin{proof}[Proof of Theorem~\ref{Th:1}]
Pick a random permutation~$\sigma\in S_{n+1}$ uniformly. Note that
\begin{equation*}
\mathcal{P}'=\{(\sigma(A_i),\sigma(B_i))\mid (A_i,B_i)\in\mathcal{P}\}
\end{equation*}
is still a skew~\bol~system. The order of elements in~$[n+1]$ is rearranged by~$\sigma$.
For any~$i$, let~$E_i$ be the event that all elements of~$A_i$ go before~$B_i$, and they are separated by the extra element~$n+1$.
In other words,
\begin{equation*}
E_i=\{\sigma\in S_{n+1}\mid\sigma(a)<\sigma(n+1)<\sigma(b),\forall a\in A_i,b\in B_i\}.
\end{equation*}

Now we claim that~$E_i$ and~$E_j$ cannot happen at the same time if~$i\neq j$. Suppose for the contradiction that~$\sigma\in E_i\cap E_j$.
For any~$a\in A_i$ and~$b\in B_j$, we have~$\sigma(a)<\sigma(n+1)<\sigma(b)$. Hence~$\sigma(A_i)\cap\sigma(B_j)=\emptyset$.
Similarly, we can prove that~$\sigma(A_j)\cap \sigma(B_i)=\emptyset$. That is a contradiction to the fact that~$\mathcal{P}'$ is a skew~\bol~system.
Then we have
\begin{equation*}
\mathbb{P}\left(\bigcup_{i=1}^m E_i\right)=\sum_{i=1}^m\mathbb{P}(E_i).
\end{equation*}

\begin{table}[h]
\centering
\setlength{\tabcolsep}{3mm}{
\begin{tabular}{ccccccccccc}
\cline{2-4}\cline{6-6}\cline{9-11}
   & \multicolumn{3}{|c|}{$\sigma(A_i)$} & & \multicolumn{1}{|c|}{} & & & \multicolumn{3}{|c|}{$\sigma(B_i)$}  \\
\cline{2-4}\cline{6-6}\cline{9-11}
\phantom{aaa} & \phantom{aaa} & \phantom{aaa} & \phantom{aaa} & \multicolumn{3}{c}{$\sigma(n+1)$} & \phantom{aaa} & \phantom{aaa} & \phantom{aaa} & \phantom{aaa} \\
\cline{1-3}\cline{6-6}\cline{8-10}
\multicolumn{3}{|c|}{$\sigma(A_j)$} & & & \multicolumn{1}{|c|}{} & & \multicolumn{3}{|c|}{$\sigma(B_j)$} &   \\
\cline{1-3}\cline{6-6}\cline{8-10}
\end{tabular}}
\caption*{A schematic diagram for the proof}
\end{table}

To finish the proof, we need to calculate the probability of each~$E_i$.
Note that we only need to consider the order of elements in~$\sigma(A_i\cup B_i\cup\{n+1\})$.
Within the~$1+|A_i|+|B_i|$ places, there is exactly~$1$ slot for~$n+1$.
And for the elements in~$A_i\cup B_i$, the chance of any~$|A_i|$ elements that lead the series is equal.
There are~$\binom{|A_i|+|B_i|}{|A_i|}$ different possibilities, and exactly~$1$ of them is what we need.
Hence~$\mathbb{P}(E_i)=\left((1+|A_i|+|B_i|)\binom{|A_i|+|B_i|}{|A_i|}\right)^{-1}$, and so
\begin{equation*}
1\geqslant\mathbb{P}\left(\bigcup_{i=1}^m E_i\right)=\sum_{i=1}^m\frac{1}{(1+|A_i|+|B_i|)\binom{|A_i|+|B_i|}{|A_i|}}.
\end{equation*}
\end{proof}

Using a similar idea, we can also prove Theorem~\ref{Th:skew}, but the progress would be more tricky.
For a finite set~$X=\{x_1,\ldots,x_k\}$, where~$x_1<\cdots<x_k$, let~$X^+=X\cup\{x_k+\frac{1}{2}\}$. 

\begin{proof}[Proof of Theorem~\ref{Th:skew}]
Without loss of generality, suppose~$X_1=\{1,\ldots,n_1\},X_2=\{n_1+1,\ldots,n_1+n_2\},\ldots,X_r=\{n_1+\cdots+n_{r-1}+1,\ldots,n_1+\cdots+n_r\}$,
where~$n_1+\cdots+n_r=n$. Let~$X'=X_1^+\cup\cdots\cup X_r^+$, and
\begin{equation*}
\Omega=\{\sigma\in S(X')\mid \sigma(X_k^+)=X_k^+,\forall k\in[r]\}.
\end{equation*}
Pick a random permutation~$\sigma\in\Omega$ uniformly. Then
\begin{equation*}
\mathcal{P}'=\{(\sigma(A_i),\sigma(B_i))\mid (A_i,B_i)\in\mathcal{P}\}
\end{equation*}
is still a skew~\bol~system. For any~$i\in[m]$ and~$k\in[r]$, let
\begin{equation*}
E_{ik}=\{\sigma\in\Omega\mid\sigma(a)<\sigma(n_1+\cdots+n_k+1/2)<\sigma(b),\forall a\in A_i\cap X_k,b\in B_i\cap X_k\},
\end{equation*}
and
\begin{equation*}
E_i=\bigcap_{k=1}^r E_{ik}.
\end{equation*}
Note that for a fixed~$i$, the events~$E_{ik}$'s are independent. Using this fact and the similar argument adopted in the proof of Theorem~\ref{Th:1}, we have
\begin{equation*}
\begin{split}
  \mathbb{P}(E_i) & =\prod_{k=1}^r\mathbb{P}(E_{ik})=\prod_{k=1}^r\left((1+|A_i\cap X_k|+|B_i\cap X_k|)\binom{|A_i\cap X_k|+|B_i\cap X_k|}{|A_i\cap X_k|}\right)^{-1} \\
    & \geqslant\prod_{k=1}^r\left((1+n_k)\binom{|A_i\cap X_k|+|B_i\cap X_k|}{|A_i\cap X_k|}\right)^{-1}.
\end{split}
\end{equation*}

Now we claim~$E_i\cap E_j=\emptyset$ for distinct~$i,j$. Suppose for the contradiction that~$\sigma\in E_i\cap E_j$.
Then for any~$k$, we have~$\sigma\in E_{ik}\cap E_{jk}$,so~$\sigma(a)<\sigma(n_1+\cdots+n_k+\frac{1}{2})<\sigma(b)$ for any~$a\in A_i\cap X_k,b\in B_j\cap X_k$.
Then~$\sigma(A_i\cap X_k)$ and~$\sigma(B_j\cap X_k)$ are disjoint. Similarly, we can prove that~$\sigma(A_j\cap X_k)$ and~$\sigma(B_i\cap X_k)$ are disjoint.
Hence~$\sigma(A_i)\cap\sigma(B_j)=\sigma(A_j)\cap\sigma(B_i)=\emptyset$, which contradicts the fact that~$\mathcal{P}'$ is a skew~\bol~system.
In this way, we have
\begin{equation*}
1\geqslant\mathbb{P}(\bigcup_{i=1}^m E_i)=\sum_{i=1}^m\mathbb{P}(E_i)
\geqslant\sum_{i=1}^m\prod_{k=1}^r\left((1+n_k)\binom{|A_i\cap X_k|+|B_i\cap X_k|}{|A_i\cap X_k|}\right)^{-1},
\end{equation*}
so
\begin{equation*}
\sum_{i=1}^m\left(\prod_{k=1}^r\binom{|A_i\cap X_k|+|B_i\cap X_k|}{|A_i\cap X_k|}\right)^{-1}
\leqslant\prod_{k=1}^r(1+n_k)\leqslant\left(\frac{1}{r}\sum_{k=1}^r(1+n_k)\right)^r=\left(1+\frac{n}{r}\right)^r.
\end{equation*}
\end{proof}

The next example shows the tightness of Theorem~\ref{Th:skew}.

\begin{lizi}
Suppose~$X=[n]$ is the disjoint union of~$X_1,\ldots,X_r$, where~$|X_k|=n_k$.
Let~$A_1,\ldots,A_{2^n}$ be all subsets of~$X$, and~$|A_1|\geqslant\cdots\geqslant|A_{2^n}|$.
Then~$\mathcal{P}=\{(A_i,[n]\setminus A_i)\mid i\in[2^n]\}$ is a skew~\bol~system.
For a fixed~$i$, let~$|A_i\cap X_k|=a_k$, and~$(a_1,\ldots,a_r)$ is called the type of~$A_i$.
Then the contribution of~$A_i$ to the sum is~$\prod_{k=1}^r\binom{n_k}{a_k}^{-1}$.
On the other hand, there are altogether~$\prod_{k=1}^r\binom{n_k}{a_k}$ many of~$A_j$'s that have the same type with~$A_i$.
So the total contribution of a type of~$A_i$'s to the sum is exactly~$1$.
Note that there are~$\prod_{k=1}^r(1+n_k)$ many of different types, because~$a_k\in\{0,1,\ldots,n_k\}$, and each~$a_k$'s are valued independently. Hence
\begin{equation*}
\sum_{i=1}^m\left(\prod_{k=1}^r\binom{|A_i\cap X_k|+|B_i\cap X_k|}{|A_i\cap X_k|}\right)^{-1}=\prod_{k=1}^r(1+n_k),
\end{equation*}
and so the first inequality of Theorem~\ref{Th:skew} is tight for any~$n$ and~$r$.
In particular, if~$r|n$, and~$n_1=\cdots=n_r=\frac{n}{r}$, the second inequality is also tight.
\end{lizi}

\section{Proof of the symmetric version and an application}

In this section, we will prove Theorem~\ref{Th:2} and Theorem~\ref{Th:sym}, and show an application of them.
Firstly, we begin this work with proving the following lemma.

\begin{yinli}\label{L1}
Suppose  that~$X=[n]$ is the disjoint union of some sets~$X_1,\ldots,X_r$, and~$|X_k|=n_k$.
For some~$A_i,B_i\subseteq X$,~$\mathcal{P}=\{(A_i,B_i)\mid i\in[m]\}$ is a~\bol~system.
Then for any~$l\in[r]$, we have
\begin{equation}\label{E1}
\sum_{i=1}^m\left(\prod_{k=1}^r\binom{|A_i\cap X_k|+|B_i\cap X_k|}{|A_i\cap X_k|}\right)^{-1}\leqslant\frac{\prod_{k=1}^r (1+n_k)}{1+n_l}.
\end{equation}
\end{yinli}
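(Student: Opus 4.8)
The plan is to adapt the probabilistic argument from the proof of Theorem~\ref{Th:skew}, but to treat one distinguished part~$X_l$ \emph{symmetrically} (i.e.\ without adding an extra separating element to it) so as to gain one factor of~$(1+n_l)$ back. Concretely, I would form the product ground set~$X' = X_1^+ \cup \cdots \cup X_{l-1}^+ \cup X_l \cup X_{l+1}^+ \cup \cdots \cup X_r^+$, where only the parts other than~$X_l$ are augmented by their extra ``half-integer'' element, and let~$\Omega$ be the group of permutations of~$X'$ preserving each of these blocks setwise. I would pick~$\sigma \in \Omega$ uniformly at random; then~$\mathcal{P}' = \{(\sigma(A_i),\sigma(B_i))\}$ is again a \bol~system, since~$\sigma$ acts as a permutation of~$[n]$ (the half-integer coordinates are auxiliary and do not lie in any~$A_i$ or~$B_i$).

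Next I would define the events. For~$k \neq l$, let~$E_{ik}$ be the event (as before) that under~$\sigma$ all of~$A_i \cap X_k$ precedes the extra element of~$X_k^+$, which in turn precedes all of~$B_i \cap X_k$. For the distinguished part~$l$, let~$E_{il}$ be simply the event that all elements of~$\sigma(A_i \cap X_l)$ precede all elements of~$\sigma(B_i \cap X_l)$ inside~$X_l$ — no separating element. Set~$E_i = \bigcap_{k=1}^r E_{ik}$. Because the restrictions of~$\sigma$ to the distinct blocks are independent, $\mathbb{P}(E_i) = \prod_{k \neq l} \left((1+|A_i\cap X_k|+|B_i\cap X_k|)\binom{|A_i\cap X_k|+|B_i\cap X_k|}{|A_i\cap X_k|}\right)^{-1} \cdot \binom{|A_i\cap X_l|+|B_i\cap X_l|}{|A_i\cap X_l|}^{-1}$, and bounding each factor~$1+|A_i\cap X_k|+|B_i\cap X_k| \leqslant 1+n_k$ for~$k\neq l$ gives~$\mathbb{P}(E_i) \geqslant \left(\prod_{k\neq l}(1+n_k)\right)^{-1}\prod_{k=1}^r\binom{|A_i\cap X_k|+|B_i\cap X_k|}{|A_i\cap X_k|}^{-1}$.

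The crux is to show~$E_i \cap E_j = \emptyset$ for~$i \neq j$, and \emph{here} the \bol~(symmetric) hypothesis rather than the skew one is exactly what is needed. Suppose~$\sigma \in E_i \cap E_j$. For each~$k \neq l$, the separating-element argument of Theorem~\ref{Th:skew} shows~$\sigma(A_i\cap X_k)\cap\sigma(B_j\cap X_k)=\emptyset$ \emph{and}~$\sigma(A_j\cap X_k)\cap\sigma(B_i\cap X_k)=\emptyset$, symmetrically in~$i,j$. For~$k=l$, however, from~$E_{il}$ and~$E_{jl}$ we only get that~$\sigma(A_i\cap X_l)$ precedes~$\sigma(B_i\cap X_l)$ and likewise for~$j$; comparing the two, either~$\sigma(A_i\cap X_l)\cap\sigma(B_j\cap X_l)=\emptyset$ or~$\sigma(A_j\cap X_l)\cap\sigma(B_i\cap X_l)=\emptyset$ (whichever of the two ``$A$-blocks'' ends earlier has its image disjoint from the later ``$B$-block''). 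In the first sub-case, combining with the~$k\neq l$ conclusions for~$i,j$ gives~$\sigma(A_i)\cap\sigma(B_j)=\emptyset$, forcing~$i=j$ in the \bol~system~$\mathcal{P}'$; in the second sub-case we likewise get~$\sigma(A_j)\cap\sigma(B_i)=\emptyset$, again forcing~$i=j$. Either way we contradict~$i\neq j$. Hence the~$E_i$ are pairwise disjoint, so~$\sum_i \mathbb{P}(E_i) \leqslant 1$, which rearranges to~\eqref{E1}. I expect this disjointness step — specifically the case analysis on which $A$-block in~$X_l$ finishes first, and checking it really does yield one of the two ``covering'' equalities — to be the only delicate point; everything else is a direct transcription of the Theorem~\ref{Th:skew} computation.
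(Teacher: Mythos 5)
Your proposal is correct and follows essentially the same route as the paper's proof of Lemma~\ref{L1}: the same augmented ground set with only the parts $k\neq l$ receiving a separating element, the same events and probability bounds, and the same disjointness argument for the $E_i$ (the paper phrases the part-$l$ step contrapositively, picking a point $x\in\sigma(A_i\cap X_l)\cap\sigma(B_j\cap X_l)$ and deducing $\sigma(A_j\cap X_l)\cap\sigma(B_i\cap X_l)=\emptyset$, which is exactly your ``whichever $A$-block ends earlier'' case analysis). The only nitpick is your parenthetical that $\sigma$ ``acts as a permutation of $[n]$'' --- it is a permutation of $X'$ and may move elements of $[n]$ onto the auxiliary half-integers, but injectivity of $\sigma$ is all that is needed for $\mathcal{P}'$ to remain a Bollob\'as system, so the argument stands.
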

\begin{proof}
Without loss of generality, we may assume that~$X_1=\{1,\ldots,n_1\},X_2=\{n_1+1,\ldots,n_1+n_2\},\ldots,X_r=\{n_1+\cdots+n_{r-1}+1,\ldots,n_1+\cdots+n_r\}$.
For a fixed~$l$, denote
\begin{equation*}
X_k'=\begin{cases}
X_k^+, & \textrm{if~} k\neq l, \\
X_k, & \textrm{if~} k=l,
\end{cases}
\end{equation*}
and~$X'=\bigcup_{k=1}^r X_k'$. Let
\begin{equation*}
\Omega=\{\sigma\in S(X')\mid\sigma(X_k')=X_k',\forall k\in[r]\}.
\end{equation*}
Pick a random permutation~$\sigma\in\Omega$ uniformly. Then
\begin{equation*}
\mathcal{P}'=\{(\sigma(A_i),\sigma(B_i))\mid (A_i,B_i)\in\mathcal{P}\}
\end{equation*}
is still a~\bol~system. For any~$i\in[m]$ and~$k\neq l$, let
\begin{eqnarray*}
  E_{il} &=& \{\sigma\in\Omega\mid\sigma(a)<\sigma(b),\forall a\in A_i\cap X_l,b\in B_i\cap X_l\}, \\
  E_{ik} &=& \{\sigma\in\Omega\mid\sigma(a)<\sigma(n_1+\cdots+n_k+1/2)<\sigma(b),\forall a\in A_i\cap X_k,b\in B_i\cap X_k\},
\end{eqnarray*}
and
\begin{equation*}
E_i=\bigcap_{k=1}^r E_{ik}.
\end{equation*}
Note that for a fixed~$i$, the events~$E_{ik}$'s are independent, and we have
\begin{equation*}
\mathbb{P}(E_{il})=\binom{|A_i\cap X_k|+|B_i\cap X_k|}{|A_i\cap X_k|}^{-1},
\end{equation*}
and
\begin{equation*}
\begin{split}
  \mathbb{P}(E_{ik}) & =\left((1+|A_i\cap X_k|+|B_i\cap X_k|)\binom{|A_i\cap X_k|+|B_i\cap X_k|}{|A_i\cap X_k|}\right)^{-1} \\
    & \geqslant\left((1+n_k)\binom{|A_i\cap X_k|+|B_i\cap X_k|}{|A_i\cap X_k|}\right)^{-1},
\end{split}
\end{equation*}
for any~$k\neq l$, so
\begin{equation*}
\mathbb{P}(E_i)\geqslant(n_l+1)\prod_{k=1}^r\left((n_k+1)\binom{|A_i\cap X_k|+|B_i\cap X_k|}{|A_i\cap X_k|}\right)^{-1}.
\end{equation*}

Now we claim that~$E_i\cap E_j=\emptyset$ for distinct~$i,j$. Suppose on the contrary there is a~$\sigma\in E_i\cap E_j$.
Then like the discussion in the proof of Theorem~\ref{Th:skew},
we can prove that~$\sigma(A_i\cap X_k)$ and~$\sigma(B_j\cap X_k)$ are disjoint for any~$k\neq l$, as well as~$\sigma(A_j\cap X_k)$ and~$\sigma(B_i\cap X_k)$.
Suppose~$x\in\sigma(A_i\cap X_l)\cap\sigma(B_j\cap X_l)$. Then we have~$\sigma(a)<x<\sigma(b)$ for any~$a\in A_j\cap X_l$ and~$b\in B_i\cap X_l$,
so~$\sigma(A_j\cap X_l)$ and~$\sigma(B_i\cap X_l)$ are disjoint, which contradicts the fact that~$\mathcal{P}'$ is a~\bol~system.

\begin{table}[h]
\centering
\setlength{\tabcolsep}{3mm}{
\begin{tabular}{ccccccccc}
\cline{2-5}\cline{7-9}
   & \multicolumn{4}{|c|}{$\sigma(A_i\cap X_l)$} & & \multicolumn{3}{|c|}{$\sigma(B_i\cap X_l)$}  \\
\cline{2-5}\cline{7-9}
\phantom{aaa} & \phantom{aaa} & \phantom{aaa} & \phantom{a} & $x$ & \phantom{a} & \phantom{aaa} & \phantom{aaa} & \phantom{aaa} \\
\cline{1-3}\cline{5-8}
\multicolumn{3}{|c|}{$\sigma(A_j\cap X_l)$} & & \multicolumn{4}{|c|}{$\sigma(B_j\cap X_l)$} &   \\
\cline{1-3}\cline{5-8}
\end{tabular}}
\caption*{A schematic diagram for the proof}
\end{table}

Then we have
\begin{equation*}
1\geqslant\mathbb{P}(\bigcup_{i=1}^m E_i)\geqslant\sum_{i=1}^m (1+n_l)\prod_{k=1}^r\left((1+n_k)\binom{|A_i\cap X_k|+|B_i\cap X_k|}{|A_i\cap X_k|}\right)^{-1},
\end{equation*}
and so
\begin{equation*}
\sum_{i=1}^m\prod_{k=1}^r\binom{|A_i\cap X_k|+|B_i\cap X_k|}{|A_i\cap X_k|}^{-1}\leqslant\frac{\prod_{k=1}^r(1+n_k)}{1+n_l}.
\end{equation*}
\end{proof}

For the case~$r=2$, Lemma~\ref{L1} implies
\begin{equation*}
\sum_{i=1}^m\ \left(\binom{|A_i\cap X_1|+|B_i\cap X_1|}{|A_i\cap X_1|}\binom{|A_i\cap X_2|+|B_i\cap X_2|}{|A_i\cap X_2|}\right)^{-1}
\leqslant \min\{1+n_1,1+n_2\}\leqslant 1+\left\lfloor\frac{n}{2}\right\rfloor,
\end{equation*}
which is the result of Theorem~\ref{Th:2}. The following example proves the tightness of it.

\begin{lizi}
Suppose~$k=\lfloor\frac{n}{2}\rfloor,m=\binom{n}{k}$, and~$X=[n],X_1=[k],X_2=X\setminus X_1$.
Let~$\binom{X}{k}=\{A_1,\ldots,A_m\}$, and~$B_i=[n]\setminus A_i$. Then~$\mathcal{P}=\{(A_i,B_i)\mid i\in[m]\}$ is a~\bol~system, and
\begin{equation*}
\sum_{i=1}^m\left(\binom{|A_i\cap X_1|+|B_i\cap X_1|}{|A_i\cap X_1|}\binom{|A_i\cap X_2|+|B_i\cap X_2|}{|A_i\cap X_2|}\right)^{-1}=1+k.
\end{equation*}
\end{lizi}

For the general case of an arbitrary~$r$, taking the product of all~$l\in[r]$ on inequality (\ref{E1}), we have
\begin{equation*}
\sum_{i=1}^m\left(\prod_{k=1}^r\binom{|A_i\cap X_k|+|B_i\cap X_k|}{|A_i\cap X_k|}\right)^{-1}
\leqslant\left(\prod_{k=1}^r(1+n_k)\right)^{\frac{r-1}{r}}\leqslant\left(1+\frac{n}{r}\right)^{r-1},
\end{equation*}
which is the statement of Theorem~\ref{Th:sym}.

A family~$\mathcal{F}$ of subsets is called an anti-chain, if~$A\not\subseteq B$ for any~$A,B\in\mathcal{F}$.
The following LYM-inequality on anti-chains is introduced by Yamamoto~\cite{Y}, Meshalkin~\cite{M}, and Lubell~\cite{L},
and plays an important role in extremal set theory.

\begin{dingli}[LYM-inequality~\cite{L,M,Y}]
Suppose that~$F_1,\ldots,F_m\subseteq [n]$, and the family~$\mathcal{F}=\{F_1,\ldots,F_m\}$ is an anti-chain. Then we have
\begin{equation*}
\sum_{i=1}^m \binom{n}{|F_i|}^{-1}\leqslant 1.
\end{equation*}
\end{dingli}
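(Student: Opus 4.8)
The plan is to derive the LYM-inequality as an immediate corollary of Bollob\'as's theorem (Theorem~\ref{Th:Bollobas}), exactly the way the classical proof proceeds. Given an anti-chain $\mathcal{F}=\{F_1,\ldots,F_m\}$ with $F_i\subseteq[n]$, I would set $A_i=F_i$ and $B_i=[n]\setminus F_i$ for each $i\in[m]$, and check that $\mathcal{P}=\{(A_i,B_i)\mid i\in[m]\}$ is a~\bol~system.

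For the diagonal condition, $A_i\cap B_i=F_i\cap([n]\setminus F_i)=\emptyset$ trivially. For the off-diagonal condition, observe that $A_i\cap B_j=F_i\setminus F_j$, which is nonempty precisely when $F_i\not\subseteq F_j$. Since $\mathcal{F}$ is an anti-chain, $F_i\not\subseteq F_j$ holds for every pair of distinct indices $i\neq j$, so indeed $A_i\cap B_j\neq\emptyset$ whenever $i\neq j$, and $\mathcal{P}$ is a~\bol~system. Applying Theorem~\ref{Th:Bollobas} to $\mathcal{P}$, and using $|A_i|+|B_i|=|F_i|+(n-|F_i|)=n$ together with $|A_i|=|F_i|$, we get $\binom{|A_i|+|B_i|}{|A_i|}=\binom{n}{|F_i|}$, so the conclusion reads $\sum_{i=1}^m\binom{n}{|F_i|}^{-1}\leqslant 1$, which is exactly the LYM-inequality.

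The argument contains no real obstacle; the only point that needs a moment's care is reading the anti-chain hypothesis correctly, namely that $A\not\subseteq B$ is required for all \emph{distinct} $A,B\in\mathcal{F}$, so that the off-diagonal intersections $A_i\cap B_j$ are nonempty for all $i\neq j$ and not merely for $i<j$. (If one only wanted the weaker skew-type bound one could instead invoke Theorem~\ref{Th:1} or Theorem~\ref{Th:Hegedus}, but the full~\bol~theorem gives the sharp constant $1$ directly.) Alternatively, one could give a self-contained probabilistic proof in the spirit of this paper: pick a uniformly random maximal chain $\emptyset=C_0\subsetneq C_1\subsetneq\cdots\subsetneq C_n=[n]$ (equivalently a uniform $\sigma\in S_n$), let $E_i$ be the event that $F_i$ appears in the chain, note that $\mathbb{P}(E_i)=\binom{n}{|F_i|}^{-1}$, and observe that the anti-chain property forces the $E_i$ to be pairwise disjoint, whence $\sum_{i=1}^m\mathbb{P}(E_i)=\mathbb{P}\!\left(\bigcup_{i=1}^m E_i\right)\leqslant 1$.
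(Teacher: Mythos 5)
Your proof is correct and is essentially the argument the paper itself indicates: immediately after stating the LYM-inequality, the paper performs exactly your reduction ($A_i=F_i$, $B_i=[n]\setminus F_i$, anti-chain $\Rightarrow$ Bollob\'as system), which combined with Theorem~\ref{Th:Bollobas} gives the stated bound. The verification that $A_i\cap B_j=F_i\setminus F_j\neq\emptyset$ for $i\neq j$ and the computation $\binom{|A_i|+|B_i|}{|A_i|}=\binom{n}{|F_i|}$ are both right.
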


Let~$A_i=F_i,B_i=[n]\setminus F_i$, and~$\mathcal{P}=\{(A_i,B_i)\mid i\in[m]\}$.
Then~$\mathcal{F}$ is an anti-chain implies~$\mathcal{P}$ is a~\bol~system.
So Theorem~\ref{Th:2} and Theorem~\ref{Th:sym} lead to the following two variations of LYM-inequality.

\begin{tuilun}
Suppose that~$X=[n]$ is the disjoint union of~$X_1$ and~$X_2$,
and for some~$F_1,\ldots,F_m\subseteq [n]$, the family~$\mathcal{F}=\{F_1,\ldots,F_m\}$ is an anti-chain. Then we have
\begin{equation*}
\sum_{i=1}^m \left(\binom{|X_1|}{|F_i\cap X_1|}\binom{|X_2|}{|F_i\cap X_2|}\right)^{-1}\leqslant 1+\left\lfloor\frac{n}{2}\right\rfloor.
\end{equation*}
\end{tuilun}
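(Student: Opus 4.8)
The final statement is the Corollary giving a two-part LYM-type inequality, and the plan is to derive it directly from Theorem~\ref{Th:2} by the standard anti-chain $\leftrightarrow$ \bol~system translation already indicated in the text.

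\medskip

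First I would set $A_i=F_i$ and $B_i=[n]\setminus F_i$ for each $i\in[m]$, so that $A_i\cap B_i=\emptyset$ and $|A_i|+|B_i|=n$. The key observation is that for distinct $i,j$ we have $A_i\cap B_j=F_i\setminus F_j$, which is empty precisely when $F_i\subseteq F_j$. Since $\mathcal{F}$ is an anti-chain, $F_i\not\subseteq F_j$ for all $i\neq j$, hence $A_i\cap B_j\neq\emptyset$ for every $i\neq j$; thus $\mathcal{P}=\{(A_i,B_i)\mid i\in[m]\}$ is a genuine (symmetric) \bol~system on the ground set $X=X_1\cup X_2$. This lets me apply Theorem~\ref{Th:2} to $\mathcal{P}$.

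\medskip

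Next I would simplify the binomial coefficients appearing in Theorem~\ref{Th:2}. For each $i$ and each $k\in\{1,2\}$ we have $|A_i\cap X_k|+|B_i\cap X_k|=|F_i\cap X_k|+|(X_k\setminus F_i)|=|X_k|$, and $|A_i\cap X_k|=|F_i\cap X_k|$. Therefore
\begin{equation*}
\binom{|A_i\cap X_k|+|B_i\cap X_k|}{|A_i\cap X_k|}=\binom{|X_k|}{|F_i\cap X_k|},
\end{equation*}
so each summand in Theorem~\ref{Th:2} equals $\left(\binom{|X_1|}{|F_i\cap X_1|}\binom{|X_2|}{|F_i\cap X_2|}\right)^{-1}$, which is exactly the summand in the Corollary. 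Plugging this into the conclusion of Theorem~\ref{Th:2} gives the bound $1+\lfloor n/2\rfloor$ directly, completing the proof.

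\medskip

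There is essentially no obstacle here: the only points requiring care are the elementary verification that the anti-chain condition yields the symmetric (not merely skew) \bol~property—the implication $A_i\cap B_j=\emptyset\Rightarrow F_i\subseteq F_j$ must be checked in the correct direction for all unordered pairs—and the bookkeeping that the ground set partition $X=X_1\cup X_2$ is carried over unchanged so that the $X_k$-restricted binomial coefficients collapse as claimed. Both are routine, so the corollary follows immediately from Theorem~\ref{Th:2}.
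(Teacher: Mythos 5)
Your proposal is correct and follows exactly the paper's route: the substitution $A_i=F_i$, $B_i=[n]\setminus F_i$ turns the anti-chain into a \bol~system (via $A_i\cap B_j=F_i\setminus F_j\neq\emptyset$ for $i\neq j$), the binomial coefficients collapse to $\binom{|X_k|}{|F_i\cap X_k|}$, and Theorem~\ref{Th:2} gives the bound. The paper states this reduction only in passing, so your write-up simply supplies the same argument with the routine verifications made explicit.
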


\begin{tuilun}
Suppose that~$X=[n]$ is the disjoint union of~$X_1,\ldots,X_r$,
and for some~$F_1,\ldots,F_m\subseteq [n]$, the family~$\mathcal{F}=\{F_1,\ldots,F_m\}$ is an anti-chain. Then we have
\begin{equation*}
\sum_{i=1}^m \left(\prod_{k=1}^r\binom{|X_k|}{|F_i\cap X_k|}\right)^{-1}\leqslant \left(1+\frac{n}{r}\right)^{r-1}.
\end{equation*}
\end{tuilun}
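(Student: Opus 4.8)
The plan is to deduce this corollary directly from Theorem~\ref{Th:sym} via the standard complementation trick that converts an anti-chain into a~\bol~system. First I would put $A_i=F_i$ and $B_i=[n]\setminus F_i$ for each $i\in[m]$, and set $\mathcal{P}=\{(A_i,B_i)\mid i\in[m]\}$. Then $A_i\cap B_i=F_i\cap([n]\setminus F_i)=\emptyset$, so the pairs are disjoint, and for $i\neq j$ the anti-chain hypothesis gives $F_i\not\subseteq F_j$, hence $A_i\cap B_j=F_i\setminus F_j\neq\emptyset$. Thus $\mathcal{P}$ is a~\bol~system, so Theorem~\ref{Th:sym} applies to it.

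The second step is to reconcile the binomial factors with those of Theorem~\ref{Th:sym}. Since $A_i\cap X_k=F_i\cap X_k$ and $B_i\cap X_k=X_k\setminus F_i$ partition $X_k$, we have $|A_i\cap X_k|+|B_i\cap X_k|=|X_k|$ and $|A_i\cap X_k|=|F_i\cap X_k|$, so
\begin{equation*}
\binom{|A_i\cap X_k|+|B_i\cap X_k|}{|A_i\cap X_k|}=\binom{|X_k|}{|F_i\cap X_k|}.
\end{equation*}
Substituting this identity into the conclusion of Theorem~\ref{Th:sym} yields exactly
\begin{equation*}
\sum_{i=1}^m\left(\prod_{k=1}^r\binom{|X_k|}{|F_i\cap X_k|}\right)^{-1}\leqslant\left(1+\frac{n}{r}\right)^{r-1},
\end{equation*}
which is the asserted inequality.

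There is no genuine obstacle here; the proof is a routine reduction, and the only point worth a word of care is the degenerate reading of the anti-chain axiom (``$A\not\subseteq B$ for any $A,B\in\mathcal{F}$''), which one handles simply by regarding $\mathcal{F}=\{F_1,\ldots,F_m\}$ as a family of \emph{distinct} subsets so that the containment constraint is active precisely for $i\neq j$. As a consistency check, the case $r=1$ recovers the classical LYM-inequality, since then the right-hand side is $(1+n)^{0}=1$ and $\binom{|X_1|}{|F_i\cap X_1|}=\binom{n}{|F_i|}$. One could alternatively bypass the explicit appeal to Theorem~\ref{Th:sym}, applying Lemma~\ref{L1} to the system $\mathcal{P}$ for each $l\in[r]$ and multiplying the $r$ resulting bounds together, but routing through Theorem~\ref{Th:sym} gives the cleanest presentation.
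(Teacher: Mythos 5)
Your proposal is correct and follows exactly the paper's route: the paper derives this corollary by setting $A_i=F_i$, $B_i=[n]\setminus F_i$, observing that the anti-chain condition makes $\mathcal{P}$ a \bol~system, and invoking Theorem~\ref{Th:sym}. Your verification of the binomial-coefficient identity and the \bol~property is just a more explicit write-up of the same reduction.
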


\bibliography{bib}

\end{document}